\documentclass[11pt]{article} 
\usepackage{amsmath, amssymb, latexsym, tabularx, url}
\pagestyle{plain}

\setlength{\textwidth}{15cm}
\setlength{\topmargin}{0cm}
\setlength{\headheight}{0cm}
\setlength{\headsep}{0cm}
\setlength{\topskip}{0cm}
\setlength{\textheight}{22cm}
\setlength{\oddsidemargin}{0.5cm}
\setlength{\evensidemargin}{0.5cm}
\parindent 0cm

\newtheorem{defin}{}
\newtheorem{saetze}[defin]{}
\newtheorem{conjec}[defin]{}
\newtheorem{lemmas}[defin]{}
\newtheorem{folger}[defin]{}
\newtheorem{bemerk}[defin]{}

\newenvironment{theorem}  {\begin{saetze}\it {\bf Theorem:}}{\end{saetze}}
\newenvironment{conjecture}{\begin{conjec}\it {\bf Conjecture:}}{\end{conjec}}
\newenvironment{lemma}    {\begin{lemmas}\it {\bf Lemma:}}{\end{lemmas}}
\newenvironment{corollary}{\begin{folger}\it {\bf Corollary:}}{\end{folger}}
\newenvironment{remark}   {\begin{bemerk}\rm {\it Remark:}}{\end{bemerk}}
\newenvironment{proof}    {{\it Proof}:}{{\hfill \fillbox \bigskip}}

\newcommand{\fillbox}{\mbox{$\bullet$}}
\newcommand{\ra}{\rightarrow}

\newcommand{\ms}{\mapsto}
\newcommand{\ol}{\overline}

\newcommand{\N}{\mathbb N}

\newcommand{\Z}{\mathbb Z}
\newcommand{\Q}{\mathbb Q}
\newcommand{\C}{\mathcal C}

\renewcommand{\phi}{\varphi}

\newenvironment{items}{\begin{list}{$\alph{item})$}
{\labelwidth18pt \leftmargin18pt \topsep3pt \itemsep1pt \parsep0pt}}
{\end{list}}

\newcommand{\bulit}{\item[$\bullet$]}

\begin{document}

\title{Polynomials describing the multiplication in \\
       finitely generated torsion-free nilpotent groups}
\author{Alexander Cant and Bettina Eick}
\date{\today}
\maketitle

\begin{abstract}
A famous result of Hall asserts that the multiplication and exponentiation
in finitely generated torsion-free nilpotent groups can be described by
rational polynomials. We describe an algorithm to determine such
polynomials for all torsion-free nilpotent groups of given Hirsch
length. We apply this to determine the Hall polynomials for all such
groups of Hirsch length at most $7$.
\end{abstract}

%%%%%%%%%%%%%%%%%%%%%%%%%%%%%%%%%%%%%%%%%%%%%%%%%%%%%%%%%%%%%%%%%%%%%%%%%%%%%
\section{Introduction}

Let $G$ be a finitely generated torsion-free nilpotent group ($T$-group for
short). Then $G$ has a central series $G = G_1 \ge G_2 \ge \cdots \ge G_n 
\ge G_{n+1} = \{1\}$ with infinite cyclic factors. The number $n$ depends
only on $G$ and is called its \textit{Hirsch length}. Let $a_i G_{i+1}$ be a
generator for $G_i/G_{i+1}$ for $1 \leq i \leq n$. Then for every element
$g$ of $G$ there exists a unique $x = (x_1, \ldots, x_n) \in \Z^n$ with
$g = a_1^{x_1} \cdots a_n^{x_n}$. This allows to express
the multiplication and powering in $G$ by functions
$F_i \colon \Z^n \oplus \Z^n \ra \Z, (x,y) \ms F_i(x,y)$
and $K_i \colon \Z^n \oplus \Z \ra \Z, (x,z) \ms K_i(x,z)$ 
defined by
\[ (a_1^{x_1} \cdots a_n^{x_n}) 
   (a_1^{y_1} \cdots a_n^{y_n}) 
\ = \
   a_1^{F_1(x,y)} \cdots a_n^{F_n(x,y)}, \]
and
\[ (a_1^{x_1} \cdots a_n^{x_n})^z \ = \
   a_1^{K_1(x,z)} \cdots a_n^{K_n(x,z)}.\]
 
Hall \cite{Hal69} showed that the functions $F_1,\ldots, F_n$ and $K_1, 
\ldots, K_n$ can be described by rational polynomials in $x,y,z$; these 
polynomials are also called {\em Hall polynomials} nowadays. 

Hall polynomials have several interesting applications. The first and 
most obvious application is that they allow to multiply in a $T$-group
via evaluation of polynomials. This is often significantly faster than 
the multiplication via the usually used 'collection' algorithm. We 
discuss this in Section \ref{comp} below in more detail. 

Hall polynomials also have applications beyond the obvious multiplication. 
For example, they have been used in \cite{EEi16} to classify up to 
isomorphism the 
$T$-groups of Hirsch length at most $5$. Further, Nickel \cite{Nic06}
showed how the Hall polynomials of a $T$-group $G$ can be used to 
determine a faithful matrix representation for $G$. Recently, Gul
\& Weiss \cite{FWe17} investigated Nickel's method in detail with the
aim to find upper bounds on the dimension of the arising faithful matrix
representations.

It still remains difficult to determine the Hall polynomials for a given
$T$-group $G$. Leedham-Green \& Soicher \cite{LGS98} exhibit an algorithm 
called 'Deep Thought' to compute Hall polynomials for a $T$-group $G$ from 
a polycyclic presentation of $G$. Sims \cite[Sec 9.10]{Sim94} outlines
an alternative approach for the special case that $G$ is free nilpotent.
Further, Hall's original proof \cite{Hal69} is constructive and could be
translated into an algorithm.

The aim here is to describe an algorithm that, given $n \in \N$, determines 
Hall polynomials for {\em all} $T$-groups of Hirsch length $n$ {\em 
simultaneously}. We describe the $T$-groups of Hirsch length $n$ via 
polycyclic presentations containing parameters and the Hall polynomials 
are determined as polynomials incorporating these parameters. Our 
method extends and refines \cite{EEi16b}. It differs from this and from
all other previously known approaches in its reduction of the problem to 
solving certain polynomial recursions.

Our algorithm is implemented in the HallPoly package \cite{hallpoly} of 
the computer algebra system {\sf GAP} \cite{GAP}. We used this implementation
to determine the Hall polynomials of all $T$-groups of Hirsch length at 
most $7$. The resulting polynomials are exhibited in the HallPoly package.
Some information about them is included in Section \ref{impl} below.
\medskip

\textbf{Acknowledgements.} The authors thank Anne Fr\"uhbis-Kr\"uger for
helpful discussions on Groebner bases and the use of {\sc Singular}
\cite{DGPS}.

%%%%%%%%%%%%%%%%%%%%%%%%%%%%%%%%%%%%%%%%%%%%%%%%%%%%%%%%%%%%%%%%%%%%%%%%%%%%%
\section{Nilpotent presentations}
\label{pres}

Let $n \in \N$ and $t = (t_{i,j,k} \mid 1 \leq i < j < k \leq n)
\in \Z^{\binom{n}{3}}$. Then $t$ defines a group $G(t)$ via
\[ G(t) \ = \ \langle a_1, \ldots, a_n \mid 
    a_j a_i = a_i a_j a_{j+1}^{t_{i,j,j+1}} \cdots a_n^{t_{i,j,n}}
    \mbox{ for } 1 \leq i < j \leq n \rangle.\]
Let $G_i = \langle a_i, \ldots, a_n \rangle \leq G(t)$ for $1 \leq i 
\leq n+1$. The presentation of $G(t)$ implies that $G(t) = G_1 \geq \cdots 
\geq G_{n+1} = \{1\}$ is a central series of $G$ with cyclic factors 
$G_i/G_{i+1} = \langle a_i G_{i+1} \rangle$. 
Hence $G(t)$ is finitely generated nilpotent and has Hirsch length at most 
$n$. It follows readily that for each element $g \in G(t)$ there 
exists $x \in \Z^n$ so that 
\[ g = a_1^{x_1} \cdots a_n^{x_n}.\]

We say that $G(t)$ is {\em consistent} if $G(t)$ has Hirsch length precisely 
$n$. In this case $G(t)$ is a $T$-group and each quotient of the series 
$G(t) = G_1 \geq \cdots \geq G_{n+1} = \{1\}$ is infinite cyclic. Hence
for each element $g \in G(t)$ there exists a unique $x \in \Z^n$ with 
$g = a_1^{x_1} \cdots a_n^{x_n}$. We define
\[ \C_n = \{ t \in \Z^{\binom{n}{3}} \mid G(t) \mbox{ consistent} \}.\]

\begin{lemma}\label{HLGt}
For each $T$-group $G$ of Hirsch length $n$ there exists $t \in \C_n$ 
with $G \cong G(t)$.
\end{lemma}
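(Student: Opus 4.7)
The plan is to read off the coefficient tuple $t$ directly from any central series of $G$ with infinite cyclic factors, obtain a surjection $G(t) \to G$ from the universal property of presentations, and then conclude both $G \cong G(t)$ and $t \in \C_n$ by a Hirsch-length count.

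Concretely, I would fix a central series $G = H_1 \geq H_2 \geq \cdots \geq H_n \geq H_{n+1} = \{1\}$ with each $H_i/H_{i+1}$ infinite cyclic, and choose $a_i \in H_i$ whose image generates $H_i/H_{i+1}$. A downward induction on $k$, using that $H_k = \langle a_k \rangle H_{k+1}$ and that the factor $H_k/H_{k+1}$ is infinite cyclic, shows that $H_k = \langle a_k, \ldots, a_n \rangle$ and that every element of $H_k$ has a \emph{unique} normal form $a_k^{x_k} \cdots a_n^{x_n}$ with $(x_k, \ldots, x_n) \in \Z^{n-k+1}$. Since the series is central, $[a_i, a_j] \in H_{j+1}$ for all $1 \leq i < j \leq n$, so this normal form produces unique integers $t_{i,j,j+1}, \ldots, t_{i,j,n}$ with
\[ a_j a_i \;=\; a_i a_j \cdot a_{j+1}^{t_{i,j,j+1}} \cdots a_n^{t_{i,j,n}}, \]
and I take these as the coordinates of $t = (t_{i,j,k}) \in \Z^{\binom{n}{3}}$.

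By construction, the defining relations of $G(t)$ hold among $a_1, \ldots, a_n$ in $G$, so the universal property of the presentation yields a homomorphism $\phi \colon G(t) \to G$ sending each generator of $G(t)$ to the corresponding $a_i$; this $\phi$ is surjective because the $a_i$ generate $G$. As observed in Section \ref{pres}, $G(t)$ is finitely generated nilpotent of Hirsch length at most $n$; since Hirsch length is non-increasing under quotients and $G$ has Hirsch length exactly $n$, the group $G(t)$ also has Hirsch length exactly $n$, so $G(t)$ is consistent and $t \in \C_n$. The kernel of $\phi$ then has Hirsch length $0$, and being a subgroup of the torsion-free group $G(t)$ it must be trivial, so $\phi$ is an isomorphism. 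The only genuinely delicate point is the normal-form induction at the start, and this is the step I expect to be the main obstacle in keeping the argument rigorous; once it is in place, the rest follows from the universal property together with the Hirsch-length bookkeeping already established in Section \ref{pres}.
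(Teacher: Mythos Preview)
Your proposal is correct and follows essentially the same route as the paper: pick generators along a central series with infinite cyclic factors, read off $t$ from the commutators, and use the universal property of the presentation to obtain a surjection $G(t)\to G$. The only difference is in the injectivity step: the paper argues directly that any element of $G(t)$ can be written as $a_1^{x_1}\cdots a_n^{x_n}$ and maps to $g_1^{x_1}\cdots g_n^{x_n}$, which equals $1$ in $G$ only when all $x_i=0$ by uniqueness of normal forms in $G$; you instead invoke additivity of Hirsch length to force $G(t)$ consistent and the kernel torsion-free of Hirsch length $0$. Both arguments are short and standard, and each yields $t\in\C_n$ as a by-product.
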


\begin{proof}
Choose a central series $G = G_1 \geq \cdots \geq G_n \geq G_{n+1} 
= \{1\}$ with infinite cyclic quotients for $G$ and let $g_i G_{i+1}$ be
a generator for $G_i/G_{i+1}$. Then there exists a (unique) $t \in 
\Z^{\binom{n}{3}}$ so that $g_j g_i = g_i g_j g_{j+1}^{t_{i,j,j+1}} 
\cdots g_n^{t_{i,j,n}}$ holds for $1 \leq i < j \leq n$. Hence there
exists an epimorphism $G(t) \ra G$ that maps $a_i$ to $g_i$. The choice 
of $g_1, \ldots, g_n$ implies that every element in $G$ can be written 
uniquely as $g_1^{x_1} \cdots g_n^{x_n}$ for certain $x_1, \ldots, x_n 
\in \Z$. Thus $a_1^0 \cdots a_n^0$ is the only preimage of $1$ under the
epimorphism $G(t) \ra G$. This implies that the epimorphism is injective 
and hence an isomorphism.
\end{proof}

%%%%%%%%%%%%%%%%%%%%%%%%%%%%%%%%%%%%%%%%%%%%%%%%%%%%%%%%%%%%%%%%%%%%%%%%%%%%%
\section{Solving recursions}

The Bernoulli numbers $\{ B_l \mid l \in \N_0 \}$ are an infinite sequence
of rational numbers starting $B_0 = 1, B_1 = -\frac{1}{2}, B_2 = \frac{1}{6}, 
B_3 = 0, \ldots$. They satisfy the following equation for all $x,m \in \N$:
\[ \sum_{i=1}^{x-1} i^m 
   = \frac{1}{m+1} \sum_{k=0}^m B_k {\binom{m+1}{k}} x^{m-k+1}.\]

It is well-known that the Bernoulli numbers can be used to solve certain
polynomial recursions. We recall this in our special case as follows.
We include a short constructive proof for completeness.

\begin{lemma}
\label{bernoulli}
Let $R$ be a ring with $\Q \subseteq R$. Let $g(x) = c_0 + c_1 x + \cdots + 
c_l x^l \in R[x]$ and $f_0 \in R$. For $m > 0$ define 
\[ f_m = \sum_{k=0}^{l+1-m} 
               \frac{c_{m+k-1}}{m+k} B_k {\binom{m+k}{k}} \in R.\]
Then the polynomial $f(x) = f_0 + f_1 x + \cdots + f_{l+1} x^{l+1} \in R[x]$
satisfies $f(0) = f_0$ and $f(x+1) = f(x) + g(x)$ for all $x \in \N_0$.
\end{lemma}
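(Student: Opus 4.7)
The plan is to verify the polynomial identity $f(x+1)-f(x)=g(x)$ by checking it for every $x \in \N_0$; since both sides are polynomials, agreement on the non-negative integers already yields the identity for all $x \in \N_0$ as required. The claim $f(0)=f_0$ is immediate from the explicit form of $f$.

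My strategy is to prove the equivalent telescoped identity
\[ f(N) - f(0) \ = \ \sum_{x=0}^{N-1} g(x) \qquad \text{for all } N \in \N_0, \]
from which the recurrence follows by taking first differences in $N$. Expanding $g$ and interchanging the two sums on the right gives
\[ \sum_{x=0}^{N-1} g(x) \ = \ \sum_{k=0}^{l} c_k \sum_{x=0}^{N-1} x^k, \]
and I would apply the Bernoulli summation formula from the start of the section to each inner power sum to produce a closed-form polynomial in $N$ whose monomials are proportional to $c_k B_j \binom{k+1}{j} N^{k+1-j}$.

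The key manipulation is a reindexing: I would set $m=k+1-j$, so that $m$ ranges from $1$ to $l+1$ and, for each fixed $m$, the index $k$ runs from $m-1$ to $l$; writing $k=m+k'-1$ with $k' \in \{0,\ldots,l+1-m\}$, the coefficient of $N^m$ becomes
\[ \sum_{k'=0}^{l+1-m} \frac{c_{m+k'-1}}{m+k'}\, B_{k'}\, \binom{m+k'}{k'}, \]
which is exactly $f_m$. Therefore $\sum_{x=0}^{N-1} g(x) = \sum_{m=1}^{l+1} f_m N^m = f(N) - f_0$, as desired.

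The main obstacle is the bookkeeping in the double-sum swap, together with a boundary subtlety at $k=0$: the Bernoulli identity is stated for $m \in \N$, whereas the constant term $c_0$ of $g$ corresponds to $k=0$ and contributes $c_0 N$ to the sum. I would either treat $k=0$ as a separate case (where $\sum_{x=0}^{N-1}1 = N$ matches the formula $\tfrac{1}{1}B_0\binom{1}{0}N$ from Faulhaber extended to $m=0$) or adopt the convention $0^0=1$ so that the reindexing proceeds uniformly for all $k \in \{0,\ldots,l\}$. Beyond this, the argument is purely formal and requires no further input than the Bernoulli identity already recalled.
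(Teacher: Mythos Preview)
Your proposal is correct and is essentially the paper's own argument run in the opposite direction: the paper starts from the definition of $f(x)$, performs the substitution $j=m+k-1$, and recognises the Faulhaber sum $\sum_{i<x} i^{j}$ to arrive at $f(x)=f_0+\sum_{i<x} g(i)$, whereas you start from $\sum_{x<N} g(x)$, apply Faulhaber, and reindex to recover the $f_m$. Your explicit treatment of the $k=0$ boundary (via $0^0=1$ so that $\sum_{x=0}^{N-1} x^0=N$ matches the Faulhaber expression) is in fact tidier than the paper's, which silently extends the power-sum identity to exponent $0$.
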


\begin{proof}
By construction $f(0) = f_0$ holds and it remains to show the recursion.
Let $x \in \N$. Using $j = m+k-1$ we obtain the following:
\begin{eqnarray*}
f(x) 
  &=& f_0 + \sum_{m=1}^{l+1} f_m x^m 
  = f_0 + \sum_{m=1}^{l+1} \sum_{k=0}^{l+1-m} 
               \frac{c_{m+k-1}}{m+k} B_k {\binom{m+k}{k}} x^m \\
  &=& f_0 + \sum_{j=0}^l 
             \sum_{k=0}^j \frac{c_j}{j+1} B_k {\binom{j+1}{k}} x^{j-k+1}  \\
  &=& f_0 + \sum_{j=0}^l c_j 
           \Big(\frac{1}{j+1} \sum_{k=0}^j B_k {\binom{j+1}{k}} 
                x^{j-k+1}\Big) \\
  &=& f_0 + \sum_{j=0}^l c_j \sum_{i=1}^{x-1} i^j 
  = f_0 + \sum_{i=1}^{x-1} \sum_{j=0}^l c_j i^j 
  = f_0 + \sum_{i=1}^{x-1} g(i). 
\end{eqnarray*}
This implies that $f(x) = f(x-1) + g(x-1)$ for all $x \in \N$ and hence 
$f(x+1) = f(x) + g(x)$ for all $x \in \N_0$.
\end{proof}

%%%%%%%%%%%%%%%%%%%%%%%%%%%%%%%%%%%%%%%%%%%%%%%%%%%%%%%%%%%%%%%%%%%%%%%%%%%%%
\section{Hall polynomials}
\label{hall}

Let $n \in \N$ and let $T = (T_{i,j,k} \mid 1 \leq i < j < k \leq n)$ be
a sequence of ${\binom{n}{3}}$ indeterminates. This defines the parametrised 
presentation 
\[ G(T) \ = \ \langle a_1, \ldots, a_n \mid 
    a_j a_i = a_i a_j a_{j+1}^{T_{i,j,j+1}} \cdots a_n^{T_{i,j,n}}
    \mbox{ for } 1 \leq i < j \leq n \rangle.\]
By Lemma \ref{HLGt}, for each $T$-group $G$ of Hirsch length $n$ there exists
$t \in \C_n$ so that $G \cong G(t)$. Hence we can consider the parametrised
presentation $G(T)$ as a {\em generic} description for all $T$-groups of 
Hirsch length $n$. 

In this section we describe an algorithm that for $1 \leq i \leq n$ 
determines \textit{multiplication polynomials} $F_i(T;x,y)$ in the 
indeterminates
$x = (x_1, \ldots, x_n)$ and $y = (y_1, \ldots, y_n)$ and the parameters $T$, 
so that for all $t \in \C_n$ and each $x,y \in \Z^n$ the
equation
\[ (a_1^{x_1} \cdots a_n^{x_n})(a_1^{y_1} \cdots a_n^{y_n}) 
      = a_1^{F_1(t;x,y)} \cdots a_n^{F_n(t;x,y)} \mbox{ holds in } G(t). \]

Simultaneously, we describe an algorithm that for $1 \leq i \leq n$
determines \textit{powering polynomials} $K_i(T; x,z)$ for $1 \leq i 
\leq n$ in 
indeterminates $x = (x_1, \ldots, x_n)$ and $z$ and the
parameters $T$, so that for all $t \in \C_n$,
$x \in \Z^n$ and $z \in \Z$ the equation
\[ (a_1^{x_1} \cdots a_n^{x_n})^z 
      = a_1^{K_1(t;x,z)} \cdots a_n^{K_n(t;x,z)} \mbox{ holds in } G(t). \]

Additionally to the multiplication and powering polynomials we determine 
\textit{conjugation polynomials} $R_{i,j,k}(T;u,v)$ for $1 \leq i < j < 
k \leq n$
in the indeterminates $u,v$ and the parameters $T$, so that for all $t \in 
\C_n$ and every $u,v \in \Z$ the equation
\[ a_j^u a_i^v = a_i^v a_j^u 
   a_{j+1}^{R_{i,j,j+1}(t;u,v)} \cdots a_n^{R_{i,j,n}(t;u,v)}
   \mbox{ holds in } G(t).\]
The conjugation polynomials $R_{i,j,k}$ form a
key step towards constructing the multiplication polynomials $F_1,
\ldots, F_n$ and powering polynomials $K_1, \ldots, K_n$.

%%%%%%%%%%%%%%%%%%%%%%%%%%%%%%%%%%%%%%%%%%%%%%%%%%%%%%%%%%%%%%%%%%%%%%%%%%%%%
\subsection{An induction approach}
\label{induct}

Multiplication, powering and conjugation polynomials for the 
smallest cases $n \leq 2$ are trivial to determine, since $G(T)$ is 
free abelian in these cases. In the remainder of this section we 
consider $n > 2$ and we assume by induction that multiplication, powering
and conjugation polynomials for the case $n-1$ are known.
We define 
\begin{eqnarray*}
U(T) &=& \langle a_2, \ldots, a_n \rangle \leq G(T), \\
V(T) &=& \langle a_1, a_3, \ldots, a_n \rangle \leq G(T), \mbox{ and } \\
W(T) &=& G(T)/ \langle a_n \rangle.
\end{eqnarray*}
For $t \in \C_n$ the groups $U(t)$, $V(t)$ and $W(t)$ have Hirsch length 
$n-1$. It follows that 
\begin{eqnarray*}
U(T) &\cong& G(T_u), \mbox{ where } T_u = (T_{i,j,k} \mid i,j,k \neq 1), \\
V(T) &\cong& G(T_v), \mbox{ where } T_v = (T_{i,j,k} \mid i,j,k \neq 2), \\
W(T) &\cong& G(T_w), \mbox{ where } T_w = (T_{i,j,k} \mid i,j,k \neq n),
\end{eqnarray*}
in the sense that the isomorphism holds whenever $T$ is replaced by 
some $t \in \C_n$. Note that if $t \in \C_n$, then $t_u, t_v, t_w \in 
\C_{n-1}$ follows. Hence, by induction, we can assume throughout that 
the multiplication, powering and conjugation polynomials are given 
for $U(T)$, $V(T)$ and for $W(T)$.

%%%%%%%%%%%%%%%%%%%%%%%%%%%%%%%%%%%%%%%%%%%%%%%%%%%%%%%%%%%%%%%%%%%%%%%%%%%%%
\subsection{Conjugation polynomials}
\label{conj}

We consider the conjugation polynomials $R_{i,j,k}$
for $1 \leq i < j < k \leq n$. Using the observations of Section \ref{induct}, 
it is sufficient to determine the polynomial $R_{1,2,n}$, as 
all other polynomials are covered by the induction approach. The following
lemma is a first step towards calculating $R_{1,2,n}$.

\begin{lemma}
\label{step1}
$R_{1,2,n}(T;u,v) = K_{n-1}(T_u; r, u)$ for
$r = (1, R_{1,2,3}(T; 1, v), \ldots, R_{1,2,n}(T; 1, v))$. 
\end{lemma}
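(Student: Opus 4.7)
My plan is to express the element $a_2^u a_1^v \in G(t)$ as a normal-form word in two different ways and then compare the exponent of $a_n$.

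The starting observation is that specializing the defining relation for $R_{1,2,k}$ at $u = 1$ gives
\[ a_2 a_1^v = a_1^v \bigl(a_2 \, a_3^{R_{1,2,3}(T;1,v)} \cdots a_n^{R_{1,2,n}(T;1,v)}\bigr) = a_1^v \, g, \]
where $g \in U(t)$ is precisely the element whose coordinate tuple in the generators $a_2,a_3,\ldots,a_n$ of $U$ equals the vector $r$ from the statement. Rearranging, $g = a_1^{-v} a_2 a_1^v$, so $g$ is the conjugate of $a_2$ by $a_1^v$, and consequently
\[ a_1^{-v} a_2^u a_1^v = g^u \quad \text{for every } u \in \Z. \]

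Next, since $g \in U(t)$, I would invoke the inductive assumption of Section~\ref{induct}: the powering polynomials of $U(T) \cong G(T_u)$ are already known. Expanding $g^u$ in the normal form of $U$ therefore gives
\[ a_2^u a_1^v = a_1^v g^u = a_1^v \, a_2^{K_1(T_u;r,u)} \, a_3^{K_2(T_u;r,u)} \cdots a_n^{K_{n-1}(T_u;r,u)}, \]
which is a normal-form word for $G(t)$ in the generators $a_1,\ldots,a_n$. On the other hand, the defining relation for the $R$-polynomials at parameter $(u,v)$ directly produces the normal form
\[ a_2^u a_1^v = a_1^v \, a_2^u \, a_3^{R_{1,2,3}(T;u,v)} \cdots a_n^{R_{1,2,n}(T;u,v)}. \]

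Because $t \in \C_n$, the normal form of an element of $G(t)$ is unique; equating the $a_n$-exponents of the two expressions yields $R_{1,2,n}(T;u,v) = K_{n-1}(T_u;r,u)$ on every specialization $t \in \C_n$ and all $u,v \in \Z$, which is the asserted identity. I do not expect a hard conceptual obstacle; the only place where care is required is the indexing shift between $G(T)$ and its subgroup $U(T) \cong G(T_u)$. The generator $a_n$ is the $n$-th (last) generator of $G$ but only the $(n-1)$-st generator of $U$, which is precisely why the $a_n$-exponent of $g^u$ is computed by $K_{n-1}$, not $K_n$; keeping this bookkeeping straight is the main thing to watch in writing out the details.
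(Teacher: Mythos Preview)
Your argument is correct and is essentially the same as the paper's: both compute the conjugate $a_1^{-v}a_2^u a_1^v$ in two ways (via the defining relation for $R_{1,2,k}$ at $(u,v)$, and as $(a_1^{-v}a_2a_1^v)^u$ followed by the powering polynomials of $U(T)$) and compare the $a_n$-exponent. Your remark on the index shift between $G(T)$ and $U(T)\cong G(T_u)$ is exactly the right observation.
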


\begin{proof}
This can be proved by a direct calculation:
\begin{eqnarray*}
a_1^{-v} a_2^u a_1^v
 &=& a_2^u a_3^{R_{1,2,3}(T;u,v)} \cdots a_n^{R_{1,2,n}(T;u,v)} \\
&\mbox{ and }& \\
a_1^{-v} a_2^u a_1^v
&=& (a_1^{-v} a_2 a_1^v)^u \\
&=& (a_2 a_3^{R_{1,2,3}(T;1,v)} \cdots a_n^{R_{1,2,n}(T;1,v)})^u \\
&=& a_2^u a_3^{K_2(T_u; r, u)} \cdots a_n^{K_{n-1}(T_u; r, u)}. 
\end{eqnarray*}
Hence the desired result follows.
\end{proof}

Lemma \ref{step1} combined with the induction approach reduces the construction
of the polynomial $R_{1,2,n}(T; u, v)$ to the construction of the polynomial
$R_{1,2,n}(T; 1, v)$. The next lemma shows that this satisfies a polynomial
recursion. Its constructive proof translates readily to an algorithm to 
determine such a recursion for $R_{1,2,n}(T; 1, v)$. Write $\Q[T]$ for 
the polynomial ring over $\Q$ with indeterminates $T_{i,j,k}$ for $1 \le 
i < j < k \le n$.

\begin{lemma}
\label{step2}
There exists $g(v) \in \Q[T][v]$ so that
$R_{1,2,n}(T;1,v+1) = R_{1,2,n}(T;1,v) + g(v)$ for all $v \in \Z$.
\end{lemma}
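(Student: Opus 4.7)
The plan is to compute $a_1^{-(v+1)} a_2 a_1^{v+1}$ by conjugating the known expansion $a_1^{-v} a_2 a_1^v = a_2 \prod_{k=3}^{n} a_k^{R_{1,2,k}(T;1,v)}$ by $a_1$, to peel off the central factor contributed by $R_{1,2,n}(T;1,v)$, and to show that the remaining $a_n$-exponent is polynomial in $v$.

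Since for $j = n$ the presentation yields $a_n a_i = a_i a_n$ for all $i < n$, the generator $a_n$ is central in $G(T)$. In particular $a_1^{-1} a_n^{R_{1,2,n}(T;1,v)} a_1 = a_n^{R_{1,2,n}(T;1,v)}$, so with the abbreviation $h := a_2 \prod_{k=3}^{n-1} a_k^{R_{1,2,k}(T;1,v)}$ we have
\[
a_1^{-(v+1)} a_2 a_1^{v+1} \;=\; (a_1^{-1} h a_1) \cdot a_n^{R_{1,2,n}(T;1,v)}.
\]
Comparing this with the normal form $a_2 \prod_{k=3}^{n} a_k^{R_{1,2,k}(T;1,v+1)}$ of the left-hand side, the lemma reduces to showing that, once $a_1^{-1} h a_1$ is brought into normal form, its $a_n$-exponent is a polynomial $g(v) \in \Q[T][v]$.

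Next I would expand $a_1^{-1} h a_1$ factor by factor. For $k = 2$ the defining relation gives $a_1^{-1} a_2 a_1 = a_2 a_3^{T_{1,2,3}} \cdots a_n^{T_{1,2,n}}$ directly. For $3 \le k \le n-1$ I would use $a_1^{-1} a_k^e a_1 = a_k^e \prod_{m=k+1}^{n} a_m^{R_{1,k,m}(T;e,1)}$; by the induction hypothesis applied to $V(T) \cong G(T_v)$, each $R_{1,k,m}(T;e,1)$ with $k \ge 3$ is a polynomial in $e$ with coefficients in $\Q[T]$. Moreover the exponents $R_{1,2,k}(T;1,v)$ for $k \le n-1$ appearing in $h$ are polynomials in $v$ with coefficients in $\Q[T]$ by the induction hypothesis applied to $W(T) \cong G(T_w)$. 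Hence the expansion of $a_1^{-1} h a_1$ is a product of powers of $a_2, \ldots, a_n$ whose exponents all lie in $\Q[T][v]$; no $a_1$ survives, so in fact $a_1^{-1} h a_1 \in U(T) = \langle a_2, \ldots, a_n \rangle$.

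Finally I would collect this product into normal form using the multiplication polynomials of $U(T)$, which by the induction hypothesis are polynomials in the exponent arguments with coefficients in $\Q[T_u] \subseteq \Q[T]$. Feeding in the $\Q[T][v]$-valued exponents obtained above yields normal-form exponents that are again in $\Q[T][v]$; the $a_n$-component is the desired $g(v)$. The key point --- and the main obstacle to keep in mind --- is that no step in this derivation may invoke the polynomial $R_{1,2,n}$ itself: peeling off the central factor $a_n^{R_{1,2,n}(T;1,v)}$ at the outset ensures that all remaining manipulations fall under the induction hypothesis on $U(T)$, $V(T)$, or $W(T)$, so no circularity arises.
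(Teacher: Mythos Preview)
Your proof is correct and follows essentially the same strategy as the paper: conjugate the known expansion of $a_1^{-v} a_2 a_1^v$ by $a_1$, peel off the central factor $a_n^{R_{1,2,n}(T;1,v)}$, and then collect the remainder inside $U(T)$ using the inductively known polynomials. The only minor variation is that for $k\ge 3$ you handle $(a_k^{e})^{a_1}$ via the conjugation polynomials $R_{1,k,m}$ coming from $V(T)$, whereas the paper rewrites it as $(a_k^{a_1})^{e}$ using the defining relation and then applies the powering polynomials of $U(T)$; both are covered by the same induction hypothesis.
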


\begin{proof}
We observe that
\begin{eqnarray*}
&& a_2 a_3^{R_{1,2,3}(T;1,v+1)} \cdots a_n^{R_{1,2,n}(T;1,v+1)} \\
&=& a_1^{-(v+1)} a_2 a_1^{v+1} \\
&=& a_1^{-1} (a_1^{-v} a_2 a_1^v) a_1 \\
&=& a_1^{-1} (a_2 a_3^{R_{1,2,3}(T;1,v)} \cdots a_n^{R_{1,2,n}(T;1,v)}) a_1 \\
&=& a_2^{a_1} (a_3^{a_1})^{R_{1,2,3}(T;1,v)} \cdots 
        (a_n^{a_1})^{R_{1,2,n}(T;1,v)}\\
&=& a_2 a_3^{T_{1,2,3}} \cdots a_n^{T_{1,2,n}} \cdot 
    \Big( \prod_{j=3}^{n-1} (a_j a_{j+1}^{T_{1,j,j+1}} \cdots 
                     a_n^{T_{1,j,n}})^{R_{1,2,j}(T;1,v)} \Big)
    \cdot a_n^{R_{1,2,n}(T; 1,v)}.
\end{eqnarray*}
Using the multiplication and powering polynomials for $U(T)$ and the 
conjugation polynomials $R_{1,2,j}$ with $j < n$ we determine 
polynomials $g_2, \ldots, g_n \in \Q[T][v]$ with
\[ a_2 a_3^{T_{1,2,3}} \cdots a_n^{T_{1,2,n}} \cdot 
    \Big( \prod_{j=3}^{n-1} (a_j a_{j+1}^{T_{1,j,j+1}} \cdots 
                     a_n^{T_{1,j,n}})^{R_{1,2,j}(T;1,v)} \Big)
  = a_2^{g_2(v)} \cdots a_n^{g_n(v)}.\]
Then it follows that $R_{1,2,n}(T;1,v+1) = R_{1,2,n}(T;1,v) + g_n(v)$.
\end{proof}

Lemma \ref{step2} and $R_{1,2,n}(T;1,0) = 0$ yield that $R_{1,2,n}(T;1,v)$ 
as a function in $v$ satisfies the assumptions of Lemma \ref{bernoulli} and 
this, in turn, allows to read off a polynomial for $R_{1,2,n}(T;1,v)$.

%%%%%%%%%%%%%%%%%%%%%%%%%%%%%%%%%%%%%%%%%%%%%%%%%%%%%%%%%%%%%%%%%%%%%%%%%%%%%
\subsection{Multiplication polynomials}
\label{mult}

We determine the multiplication polynomials $F_1, \ldots,F_n$.
By Section \ref{induct} it is sufficient to consider $F_n$. Let 
$x = (x_1, \ldots, x_n)$ and $y = (y_1, \ldots, y_n)$. Then 
\begin{align*}
  & a_1^{F_1(T;x,y)} \cdots a_n^{F_n(T;x,y)} \\
= \ & a_1^{x_1} \cdots a_n^{x_n} \cdot a_1^{y_1} \cdots a_n^{y_n} \\
= \ & a_1^{x_1+y_1} (a_2^{x_2})^{a_1^{y_1}} \cdots (a_n^{x_n})^{a_1^{y_1}}
     \cdot a_2^{y_2} \cdots a_n^{y_n} \\
= \ & a_1^{x_1+y_1} \Big( \prod_{i=2}^n 
    a_i^{x_i} a_{i+1}^{R_{1,i,i+1}(T;x_i,y_1)} \cdots 
    a_n^{R_{1,i,n}(T;x_i,y_1)} \Big)
    \cdot a_2^{y_2} \cdots a_n^{y_n}. \tag{$*$}
\end{align*}
Polynomials for $R_{i,j,k}$ can be computed as described in
Section \ref{conj}. Based on this, we can use the multiplication and 
powering in the subgroup $U(T)$ as defined in Section \ref{induct}
to evaluate the right-hand side of the product $(*)$. This yields 
a polynomial $F_n(T;x,y)$.

\begin{remark}
\label{formF}
As $a_n$ is central in $G(T)$, the polynomial $F_n(T;x,y)$ has the form
\[ F_n(T; x, y) 
   \ = \ x_n + y_n + H_n(T; x_1, \ldots, x_{n-1}, y_1, \ldots, y_{n-1}),\]
where $H_n$ is a polynomial in $x_1, \ldots, x_{n-1}, y_1, \ldots, y_{n-1}$ 
with parameters $T$.
\end{remark}

%%%%%%%%%%%%%%%%%%%%%%%%%%%%%%%%%%%%%%%%%%%%%%%%%%%%%%%%%%%%%%%%%%%%%%%%%%%%%
\subsection{Powering polynomials}
\label{power}

We determine the powering polynomials $K_1, \ldots, K_n$. By 
Section \ref{induct} it is sufficient to consider $K_n$. The next
lemma shows that $K_n$ satisfies a polynomial recursion. Its 
constructive proof translates to an algorithm to determine such 
a recursion for $K_n$.

\begin{lemma}
\label{step3}
There exists $h(z) \in \Q[T,x][z]$ with
$K_n(T;x,z+1) = K_n(T; x, z) + h(z)$ for all $z \in \Z$.
\end{lemma}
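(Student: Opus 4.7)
The plan is to mimic the increment argument used in Lemma \ref{step2}, now applied to the power $(a_1^{x_1} \cdots a_n^{x_n})^z$. I would start from the identity
\[ (a_1^{x_1} \cdots a_n^{x_n})^{z+1}
     = (a_1^{x_1} \cdots a_n^{x_n})^z \cdot (a_1^{x_1} \cdots a_n^{x_n}) \]
and evaluate each side in the canonical normal form. By definition of the powering polynomials the left-hand side has $n$-th exponent $K_n(T; x, z+1)$, while the definition of the multiplication polynomials gives the $n$-th exponent of the right-hand side as $F_n(T; K(T;x,z), x)$, where $K(T;x,z) = (K_1(T;x,z), \ldots, K_n(T;x,z))$.

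The crucial ingredient is Remark \ref{formF}, which exploits the centrality of $a_n$ to write $F_n(T; u, v) = u_n + v_n + H_n(T; u_1, \ldots, u_{n-1}, v_1, \ldots, v_{n-1})$ for a polynomial $H_n \in \Q[T][u_1, \ldots, u_{n-1}, v_1, \ldots, v_{n-1}]$. Substituting $u = K(T;x,z)$ and $v = x$ isolates $K_n(T;x,z)$ from the rest and yields
\[ K_n(T; x, z+1) - K_n(T; x, z)
   = x_n + H_n\bigl(T; K_1(T;x,z), \ldots, K_{n-1}(T;x,z), x_1, \ldots, x_{n-1}\bigr). \]
I would then set $h(z)$ equal to the right-hand side.

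To finish the proof I would verify that $h(z)$ genuinely lies in $\Q[T, x][z]$. By the induction approach of Section \ref{induct} the polynomials $K_1, \ldots, K_{n-1}$ are the powering polynomials of the quotient $W(T)$ and are therefore already available as elements of $\Q[T, x][z]$; the polynomial $H_n$ is supplied by the construction in Section \ref{mult}. Composing polynomials with polynomials and adding the constant $x_n$ produces another element of $\Q[T, x][z]$, which is exactly the recursion required.

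I do not expect a serious obstacle: the content of the lemma is essentially a combination of the multiplication law with Remark \ref{formF}. The only point to watch is that the centrality of $a_n$ is what prevents $K_n(T;x,z)$ itself from reappearing on the right-hand side; without that, the increment would not be a polynomial in $z$ alone and Lemma \ref{bernoulli} could not be applied to extract $K_n$ from the recursion.
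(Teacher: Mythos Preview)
Your proposal is correct and follows essentially the same approach as the paper: expand $(a_1^{x_1}\cdots a_n^{x_n})^{z+1}$ as a product, apply the multiplication polynomials to obtain $F_n(T;K(T;x,z),x)$, and then invoke Remark~\ref{formF} to isolate $K_n(T;x,z)$ on the right-hand side. Your explicit check that $h(z)\in\Q[T,x][z]$ via the inductively known $K_1,\ldots,K_{n-1}$ from $W(T)$ is a welcome clarification that the paper leaves implicit.
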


\begin{proof}
We evaluate
\begin{eqnarray*}
&& (a_1^{x_1} \cdots a_n^{x_n})^{z+1} \\
&=& (a_1^{x_1} \cdots a_n^{x_n})^{z} 
      \cdot (a_1^{x_1} \cdots a_n^{x_n}) \\
&=& a_1^{K_1(T;x,z)} \cdots a_n^{K_n(T;x,z)}
      \cdot (a_1^{x_1} \cdots a_n^{x_n}) \\
&=& a_1^{F_1(T;K(T;x,z),x)} \cdots 
       a_n^{F_n(T;K(T;x,z),x)},
\end{eqnarray*}
where $K(T;x,z) = (K_1(T;x,z), \ldots, K_n(T;x,z))$. By Remark \ref{formF}, it follows that
\[ K_n(T; x, z+1) = F_n(T; K(T;x,z),x) = K_n(T;x, z) + x_n + h(T,x,z), \]
where $h$ is a polynomial in $T,x$ and $z$.
We consider $T$ and $x$ as parameters and $z$ as indeterminate to 
obtain the desired result.
\end{proof}

Lemma \ref{step3} and $K_n(T;x,0) = 0$ yield that $K_n(T;x,z)$ satisfies
the assumptions of Lemma~\ref{bernoulli}, where we consider $T$ and $x$
as parameters, and this, in turn, allows to determine the polynomial $K_n$.

%%%%%%%%%%%%%%%%%%%%%%%%%%%%%%%%%%%%%%%%%%%%%%%%%%%%%%%%%%%%%%%%%%%%%%%%%%%%%
\section{Consistency}
\label{consistent}

Let $n \in \N$ and recall that $\C_n = \{ t \in \Z^{\binom{n}{3}} \mid 
G(t) \mbox{ consistent}\}$. Our aim in this section is to investigate 
$\C_n$.
As before, let $T$ be a sequence of $\binom{n}{3}$ indeterminates and 
let \linebreak
$x = (x_1, \ldots, x_n)$, $y = (y_1, \ldots, y_n)$ and $w = (w_1, \ldots, 
w_n)$ be further indeterminates. Write \linebreak 
$F(T;x,y) = (F_1(T;x,y), \ldots, F_n(T;x,y))$ and define
\begin{eqnarray*} P(T; x,y,w) &=& F(T; F(T;x,y),w) - F(T;x,F(T;y,w)) \\
  &=& (P_1(T;x,y,w),
   \ldots, P_n(T;x,y,w)).
\end{eqnarray*}
Then $P_1, \ldots, P_n$ are polynomials in the indeterminates $x,y,w$
and the parameters $T$. Let $\{ C_1(T), \ldots, C_r(T) \} \subseteq 
\Q[T]$ denote the set of all coefficients of $P_1, \ldots, P_n$ 
considered as polynomials in $x,y,w$.

\begin{theorem}
If $t \in \C_n$, then $C_1(t) = \ldots = C_r(t) = 0$.
\end{theorem}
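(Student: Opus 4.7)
The plan is to exploit associativity of multiplication in $G(t)$. If $t \in \C_n$, then $G(t)$ is a genuine $T$-group, and by the construction in Section \ref{hall} the polynomials $F_i(T;x,y)$ have the property that for every $x,y \in \Z^n$ the equation
\[ (a_1^{x_1}\cdots a_n^{x_n})(a_1^{y_1}\cdots a_n^{y_n})
   = a_1^{F_1(t;x,y)}\cdots a_n^{F_n(t;x,y)} \]
holds in $G(t)$. Crucially, because $t \in \C_n$ the normal form on the right is \emph{unique}, so the exponent tuple $F(t;x,y) \in \Z^n$ is determined by the group element on the left.

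First I would apply associativity to a general triple $x,y,w \in \Z^n$. Computing the product $(a_1^{x_1}\cdots a_n^{x_n})(a_1^{y_1}\cdots a_n^{y_n})(a_1^{w_1}\cdots a_n^{w_n})$ via the Hall polynomials in the two possible bracketings yields normal forms with exponent tuples $F(t; F(t;x,y),w)$ and $F(t; x, F(t;y,w))$, respectively. Associativity says these group elements are equal, and uniqueness of the normal form then forces the two exponent tuples to agree componentwise. Equivalently, $P_i(t;x,y,w) = 0$ for every $i$ and every $(x,y,w) \in \Z^{3n}$.

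Next I would invoke the elementary fact that a polynomial in $\Q[x,y,w]$ (in $3n$ variables) that vanishes on all of $\Z^{3n}$ is the zero polynomial; this follows by induction on the number of variables, using that a univariate polynomial over $\Q$ with infinitely many roots is zero. With $t$ fixed, each $P_i(t;x,y,w)$ is such a polynomial, so every coefficient of $P_i$ as a polynomial in $x,y,w$ vanishes at $t$. Since by definition $\{C_1(T),\ldots,C_r(T)\}$ enumerates exactly these coefficients across $i = 1,\ldots,n$, we conclude $C_1(t) = \cdots = C_r(t) = 0$.

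The only subtle point is the step in which one passes from ``associativity in $G(t)$'' to ``equality of exponent tuples'', which relies entirely on uniqueness of the normal form; this is precisely what consistency of $G(t)$ provides, and without it the argument would fail. No other real obstacle arises: everything else is either the definition of the $F_i$ from Section \ref{hall} or the standard Zariski-density of $\Z^{3n}$ in $\Q^{3n}$.
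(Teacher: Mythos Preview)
Your proposal is correct and follows essentially the same route as the paper: use associativity together with the uniqueness of normal forms (guaranteed by $t \in \C_n$) to conclude $P_i(t;x,y,w)=0$ for all integer triples, then infer that all coefficients $C_j(t)$ vanish. The only difference is that you spell out explicitly the passage from ``vanishes on $\Z^{3n}$'' to ``is the zero polynomial'', which the paper leaves implicit.
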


\begin{proof}
If $t \in \C_n$, then $G(t)$ is consistent. Recall that $a_1, \ldots, a_n$ 
are the generators of $G(t)$ and use the notation $a^x = a_1^{x_1} \cdots 
a_n^{x_n}$ for $x = (x_1, \ldots, x_n) \in \Z^n$. Let $x,y,w
\in \Z^n$. Then $(a^x a^y) a^w$ has a unique normal form $a^u$ in $G(t)$. 
Associativity asserts that $a^u = a^x (a^y a^w)$ and hence 
$u = F(t; F(t; x,y),w) = F(t; x, F(t; y,w))$. Thus $P_i(t; x,y,w) = 0$
for all $x,y,w \in \Z^n$ and $1 \leq i \leq n$. This implies that all
coefficients of all $P_i(t;x,y,w)$ are zero and thus $C_1(t) = \ldots = 
C_r(t) = 0$.
\end{proof}

Define $I_n(T)$ as the ideal in $\Q[T]$ generated by $C_1(T), \ldots, C_r(T)$.
Let $\hat{F}_n(T;x,y)$ be the remainder of $F_n(T;x,y)$ divided by $I_n(T)$ 
using a Groebner basis of $I_n(T)$ and, similarly, $\hat{K}_n(T;x,y)$ the 
remainder of $K_n(T;x,y)$. 

\begin{corollary}
If $t \in \C_n$, then $\hat{F}_n(t;x,y) = F_n(t;x,y)$ and 
$\hat{K}_n(t;x,z) = K_n(t;x,z)$ for each $x,y \in \Z^n$ and $z \in \Z$.
\end{corollary}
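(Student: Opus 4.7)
The plan is to chase definitions: the previous theorem tells us that every $t \in \C_n$ is a common zero of the generators $C_1(T), \ldots, C_r(T)$ of the ideal $I_n(T) \subseteq \Q[T]$. By linearity, the evaluation homomorphism $\Q[T] \to \Q$, $T \mapsto t$, then annihilates the entire ideal $I_n(T)$. This is the only input we need from the preceding theorem.

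Next I would carefully unpack what $\hat{F}_n$ and $\hat{K}_n$ mean. View $F_n(T;x,y)$ as an element of $\Q[T][x,y]$, that is, as a polynomial in the indeterminates $x, y$ whose coefficients lie in $\Q[T]$. The Groebner-basis division of $F_n$ by $I_n(T)$ is carried out on these $\Q[T]$-coefficients, so by construction
\[
F_n(T;x,y) - \hat{F}_n(T;x,y) \ \in \ I_n(T) \cdot \Q[T][x,y],
\]
and therefore admits an expression $\sum_{i=1}^r A_i(T;x,y)\, C_i(T)$ for suitable $A_i \in \Q[T][x,y]$. The analogous statement holds for $K_n$ viewed as an element of $\Q[T][x,z]$.

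Specialising $T \mapsto t$ finishes the argument. Since $C_i(t) = 0$ for every $i$, the expression above collapses to the zero polynomial in $x, y$, giving $F_n(t;x,y) = \hat{F}_n(t;x,y)$ as polynomials and hence as functions on $\Z^n \times \Z^n$. Repeating the same specialisation on the corresponding identity for $K_n$ yields $K_n(t;x,z) = \hat{K}_n(t;x,z)$ for all $x \in \Z^n, z \in \Z$.

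There is no real obstacle: the proof is a one-line application of the theorem, with the only point requiring care being the convention that Groebner reduction of $F_n$ modulo $I_n(T)$ acts on the $\Q[T]$-coefficients while treating $x, y$ as inert formal indeterminates. Once this convention is fixed, the corollary is simply the statement that evaluating at a common zero of the generators of an ideal kills every element of that ideal.
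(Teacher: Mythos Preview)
Your argument is correct and is precisely the intended one. The paper states this result as an immediate corollary of the preceding theorem and gives no separate proof; your unpacking of the Groebner remainder as $F_n - \hat{F}_n \in I_n(T)\cdot\Q[T][x,y]$ followed by specialisation $T \mapsto t$ is exactly the justification the reader is expected to supply.
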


Experimental evidence suggest that the following conjecture holds. If
it holds, then it provides an interesting alternative description for
the consistency of presentations of the form $G(t)$.

\begin{conjecture}
If $t \in \{t \in \Z^{n \choose 3} \mid C_1(t) = \ldots = C_r(t)=0 \}$,
then $t \in \C_n$.
\end{conjecture}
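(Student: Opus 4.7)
The plan is to promote $\Z^n$ into a group $H_t$ using the multiplication polynomial and identify it with $G(t)$, thereby forcing $G(t)$ to be a $T$-group of Hirsch length $n$, hence $t \in \C_n$. Set $H_t = (\Z^n, \star_t)$ with $x \star_t y := F(t; x, y)$. The hypothesis $C_1(t) = \cdots = C_r(t) = 0$ is exactly the identical vanishing of $P(t; x, y, w)$, so $\star_t$ is associative. The identities $F(T; 0, y) = y$ and $F(T; x, 0) = x$ hold in $\Q[T][x,y]$ because the products $1 \cdot a^y$ and $a^x \cdot 1$ need no collection in the algorithm of Section \ref{mult}, so $0$ is a two-sided identity. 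Inverses I would extract from the powering polynomial: the derivation of $K$ in Section \ref{power} yields the recursion $K(t; x, z+1) = F(t; K(t; x, z), x)$ at any $t$ satisfying the consistency conditions, which specialises at $z = -1$ to $F(t; K(t; x, -1), x) = 0$, exhibiting a left inverse. Associativity, a two-sided identity and left inverses together force $H_t$ to be a group.

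To identify $H_t$ with $G(t)$ I would argue in both directions. On the one hand, $\phi \colon H_t \to G(t)$, $x \mapsto a_1^{x_1}\cdots a_n^{x_n}$, is a surjective group homomorphism because $F$ is constructed by formal application of the relations of $G(T)$. On the other hand, evaluating $F$ on generator inputs gives the polynomial identity $F(T; e_j, e_i) = e_i + e_j + \sum_{k > j} T_{i,j,k}\, e_k$, so the defining relations of $G(t)$ hold in $H_t$ under $a_i \mapsto e_i$; this yields a homomorphism $\psi \colon G(t) \to H_t$ with $\psi \circ \phi = \mathrm{id}$ on generators and therefore on all of $H_t$. Hence $\phi$ is an isomorphism. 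It then suffices to show $H_t$ is a $T$-group of Hirsch length $n$, for which I would examine the filtration $H_t^{(i)} := \{x \in \Z^n \mid x_1 = \cdots = x_{i-1} = 0\}$; the shapes of the polynomials built in Sections \ref{conj}--\ref{mult} exhibit each $H_t^{(i)}$ as a subgroup of $H_t$ with $H_t^{(i+1)}$ central in $H_t^{(i)}$, and an induction on $n$ using the specialisations $t_u, t_v, t_w$ of Section \ref{induct} handles the infinity of the cyclic factors.

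The principal obstacle is precisely this inductive step: one must verify that the restrictions of $C_1(T), \ldots, C_r(T)$ to the parameter subsets $T_u$, $T_v$, $T_w$ generate ideals containing the corresponding $(n{-}1)$-dimensional consistency ideals, so that $C_j(t) = 0$ really forces $t_u, t_v, t_w \in \C_{n-1}$. Without such a containment it is a priori conceivable that all $C_j$ vanish at $t$ while $G(t)/\langle a_n\rangle$ has already collapsed to smaller Hirsch length, in which case the induction fails. A companion difficulty is showing that the top cyclic factor $\langle e_n\rangle$ of $H_t$ is infinite: if $k \cdot e_n = 0$ in $H_t$ for some positive $k$, one needs a polynomial identity in $\Q[T]$ forcing $k = 0$, and it is not transparent that the given generators of $I_n(T)$ suffice for this. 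I expect the conjecture ultimately to follow from a structural analysis of $I_n(T)$ — perhaps via a Groebner basis in a parameter-respecting monomial order, or via a primary decomposition tailored to the filtration above — but the ideal-theoretic compatibility of the $C_j$ under the restrictions $T \mapsto T_u, T_v, T_w$ is the real crux.
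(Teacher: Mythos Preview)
The statement you are attempting is labelled a \emph{Conjecture} in the paper; the authors offer no proof, only experimental evidence for $n \leq 7$. There is therefore no argument in the paper to compare against, and your proposal is an attack on what is, relative to this paper, an open problem.

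Your strategy is the natural one. One simplification you have missed: once $\phi\colon H_t \to G(t)$, $x \mapsto a_1^{x_1}\cdots a_n^{x_n}$, is shown to be a bijection, you are already done, since bijectivity means every element of $G(t)$ has a unique normal form $a^x$, which is precisely the definition of consistency. The filtration analysis of $H_t$ and the inductive compatibility of the ideals under $T \mapsto T_u, T_v, T_w$ that you worry about in your last paragraph are then unnecessary; those obstacles are red herrings.

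The genuine gaps lie earlier, in steps you pass over quickly. First, you need $F(t;x,y) \in \Z^n$ for arbitrary integer $t,x,y$ so that $\star_t$ is an operation on $\Z^n$ at all; the polynomials live in $\Q[T][x,y]$, and integrality for $t \notin \C_n$ is nowhere addressed. Second, your claim that $\phi$ is a homomorphism amounts to $a^x a^y = a^{F(t;x,y)}$ holding in $G(t)$ for \emph{every} $t$; the phrase ``formal application of the relations'' is the right intuition, but the construction in Section~\ref{mult} invokes the multiplication polynomials of $U(T)$, whose correctness the paper asserts only for consistent parameters. You would need to run the induction with the stronger hypothesis ``$a^x a^y = a^{F(t;x,y)}$ in $G(t)$ for all $t \in \Z^{\binom{n}{3}}$,'' using that the abstract group $G(t_u)$ always surjects onto the subgroup $U(t) \leq G(t)$ regardless of consistency. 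Third, the identities $F(T;e_j,e_i) = e_i + e_j + \sum_{k>j} T_{i,j,k}\,e_k$ and $K(T;x,z+1) = F(T;K(T;x,z),x)$ must hold in $\Q[T][x,y,z]$, not merely after specialisation to $t \in \C_n$; this is plausible from the recursive constructions in Sections~\ref{conj}--\ref{power} but is not proved in the paper and needs a careful trace. If these uniform-in-$T$ claims can be secured, your outline would indeed settle the conjecture.
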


%%%%%%%%%%%%%%%%%%%%%%%%%%%%%%%%%%%%%%%%%%%%%%%%%%%%%%%%%%%%%%%%%%%%%%%%%%%%%
\section{Implementation and results}
\label{impl}

We have implemented the algorithm resulting from Section \ref{hall} in the
HallPoly \cite{hallpoly} package of the computer algebra system {\sf GAP} 
\cite{GAP} using the {\sf GAP} interface \cite{singint} to {\sc Singular} 
\cite{DGPS} to facilitate Groebner basis calculations.

We used this implementation to determine the multiplication and powering 
polynomials $\hat{F}_n$ and $\hat{K}_n$ for $1 \leq n \leq 7$. For this
purpose we first determined $F_n$ and $K_n$ as described in Section
\ref{hall}, then we determined a (partial) Groebner basis for $I_n(T)$ 
and, based on this, obtained $\hat{F}_n$ and $\hat{K}_n$. 

Note that $I_n(T) = \{0\}$ if $n \leq 4$. A Groebner basis for $I_5(T)$ 
has $2$ elements and a Groebner basis for $I_6(T)$ has $21$ elements.
We were not able to compute a Groebner basis for $I_7(T)$, but we
determined a Groebner basis with degree-bound $7$ and this has $839$
elements. It is sufficient to reduce $F_7$ and $K_7$ to $\hat{F}_7$ and 
$\hat{K}_7$. 

The polynomials for $I_n(T)$, $\hat{F}_n$ and $\hat{K}_n$ for $n \leq 7$ 
are available in the HallPoly package. The following table contains a
summary on them.  We consider the polynomials as polynomials in $x,y,z$ 
with parameters $T$. The degree of such a polynomial is the maximal 
degree of a monomial in $x,y,z$ and the degree of a monomial is the sum 
of the exponents of its indeterminates.

\begin{center}
\begin{tabular}{|c || c | c || c | c|}
\hline
& \multicolumn{2}{ c ||}{multiplication $\hat{F}_n(T;x,y)$} 
& \multicolumn{2}{ c |}{powering $\hat{K}_n(T;x,z)$}\\ 
\hline
$n$ & degree & \# monomials & degree & \# monomials \\
\hline
1 & 1 & 2 & 2 & 1\\
2 & 1 & 2 & 2 & 1\\
3 & 2 & 3 & 4 & 3\\
4 & 3 & 8 & 6 & 13\\
5 & 4 & 26 & 8 & 43\\
6 & 5 & 83 & 10 & 127\\
7 & 6 & 266 & 12 & 354\\
\hline
\end{tabular}
\end{center}

This table exhibits that the number of monomials in both polynomials
$\hat{F}_n$ and $\hat{K}_n$ grows significantly with $n$. This is the
main reason why we could not complete the computation of $\hat{F}_8$ 
and $\hat{K}_8$: the polynomials became too large to be processed. This 
table also induces the following conjecture.

\begin{conjecture} \label{degconj}
$\hat{F}_n(T;x,y)$ has degree  $n-1$ and $\hat{K}_n(T;x,z)$ has degree 
$2(n-1)$.
\end{conjecture}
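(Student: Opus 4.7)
I would attempt this by induction on $n$, using the recursive construction of $F_n$ and $K_n$ in Section \ref{hall}. The base cases $n\leq 4$ are immediate from the table, combined with the fact stated in Section \ref{impl} that $I_n(T)=\{0\}$ in these cases (so $\hat F_n=F_n$ and $\hat K_n=K_n$). For the inductive step the plan is to strengthen the statement to a simultaneous degree bound for \emph{all} conjugation polynomials $R_{i,j,k}(T;u,v)$ and for the intermediate multiplication and powering polynomials associated with the subgroups $U(T)$, $V(T)$, $W(T)$ of Section \ref{induct}, chosen so that the bounds propagate cleanly through the formulas of Sections \ref{conj}--\ref{power}.

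For $F_n$, equation $(*)$ of Section \ref{mult} expresses $F_n$ as a product of terms of the form $a_i^{x_i}a_{i+1}^{R_{1,i,i+1}(T;x_i,y_1)}\cdots a_n^{R_{1,i,n}(T;x_i,y_1)}$ multiplied inside $U(T)\cong G(T_u)$; by Lemma \ref{step1} each $R_{1,2,n}$ is a $K_{n-1}$-polynomial for $U(T)$ composed with polynomials $R_{1,2,k}(T;1,v)$, whose degrees in $v$ are in turn controlled by Lemma \ref{step2} together with the Bernoulli summation of Lemma \ref{bernoulli}. Careful bookkeeping through these substitutions yields an \emph{a priori} upper bound on $\deg_{x,y}F_n$, but the naive bound obtained by composing polynomials of inductive degree $\leq n-2$ is much larger than $n-1$; hence the target bound can only be reached after reduction modulo $I_n(T)$.

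For $K_n$, Lemma \ref{step3} gives $K_n(T;x,z+1)-K_n(T;x,z)=h(T,x,z)$ with $h$ obtained by substituting $K(T;x,z)$ into $F_n$, and Lemma \ref{bernoulli} then produces $K_n$ with $\deg_z K_n=\deg_z h+1$. Combined with the inductive bound on $F_n$, this should allow one to push forward the bound $2(n-1)$, provided the total degree in $x$ is controlled after substitution. The factor of two is consistent with the heuristic picture that, on tensoring with $\Q$, multiplication corresponds to a truncated Baker--Campbell--Hausdorff series in a $\Q$-Lie algebra of class at most $n-1$ (giving degree $n-1$), while powering amounts to integrating this once more against the variable $z$ by Lemma \ref{bernoulli} (adding another factor of $n-1$ through the interplay of $\deg_x$ and $\deg_z$).

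The genuine obstacle is that both bounds can only hold \emph{after} reduction modulo $I_n(T)$: the raw polynomials $F_n$ and $K_n$ produced by the algorithm of Section \ref{hall} will almost certainly carry monomials of higher total degree whose coefficients must be shown to lie in the consistency ideal, and symmetrically one needs to exhibit at least one monomial of degree exactly $n-1$ (respectively $2(n-1)$) whose coefficient is non-zero modulo $I_n(T)$ to get equality. I would attack the upper bound by showing that the ``leading part'' (in degrees $>n-1$ and $>2(n-1)$) of the raw polynomial, viewed as an element of $\Q[T][x,y,z]$, is expressible as a $\Q[T]$-linear combination of the associators $P_i(T;x,y,w)$ from Section \ref{consistent}. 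Making this rigorous appears to be the crux and may require structural information about a Groebner basis of $I_n(T)$ beyond current computational reach at $n=7$; in particular this statement feels comparable in depth to the conjecture of Section \ref{consistent} characterising $\C_n$ as the zero set of $C_1(T),\ldots,C_r(T)$.
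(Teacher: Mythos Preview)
The statement you are addressing is a \emph{conjecture} in the paper, not a theorem: the paper offers no proof whatsoever, only the numerical evidence of the table in Section~\ref{impl} for $n\le 7$. So there is no ``paper's own proof'' to compare against.

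Your write-up is therefore not really a proof proposal but an honest discussion of why a proof is hard, and on that score your diagnosis is accurate. You correctly observe that the raw polynomials $F_n$ and $K_n$ produced by the recursions of Section~\ref{hall} have degree far exceeding $n-1$ and $2(n-1)$, so the conjectured bounds can only hold \emph{after} reduction modulo $I_n(T)$; you also correctly note that this ties the problem to structural control of $I_n(T)$, which the paper itself could not even compute in full for $n=7$. Your closing remark that the question is ``comparable in depth'' to the other open conjecture in Section~\ref{consistent} is in line with the paper's own stance: both are left open.

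What you have written is a reasonable outline of the obstacles, but it does not close any of the gaps you name, and you should not present it as a proof. In particular, the key step you isolate --- showing that every coefficient of a monomial of degree $>n-1$ in $F_n$ (respectively $>2(n-1)$ in $K_n$) lies in $I_n(T)$ --- is exactly the content of the conjecture and is not established by anything in your sketch. The BCH heuristic you mention is suggestive for the upper bound on $\hat F_n$ but does not by itself give the lower bound, nor the precise degree $2(n-1)$ for $\hat K_n$.
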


%%%%%%%%%%%%%%%%%%%%%%%%%%%%%%%%%%%%%%%%%%%%%%%%%%%%%%%%%%%%%%%%%%%%%%%%%%%%%
\section{Multiplication methods in $T$-groups}
\label{comp}

Suppose that $G$ is a $T$-group of Hirsch length at most $7$ given by a 
consistent nilpotent presentation $G(t)$. The Hall polynomials determined
here facilitate an effective multiplication in $G$ via the following
approach. Write $a^{x}$ for $a_1^{x_1} \cdots a_n^{x_n}$:
\begin{items}
\bulit
Precomputation: Determine $\hat{F}_1(t;x,y), \ldots, \hat{F}_n(t;x,y)$ 
by evaluating $T$ to $t$ in these polynomials.
\bulit
Multiplication: Calculate $a^{\ol{x}} a^{\ol{y}}$ by evaluating $x$ to
$\ol{x}$ and $y$ to $\ol{y}$ in the polynomials of the first step.
\end{items}
\medskip

There are various other methods known to perform the multiplication in a 
$T$-group $G$. We discuss three of them in the following.
\begin{items}
\item[\rm (1)]
The 'collection from the left' algorithm, see \cite{LGS90}, performs an
iterated application of the relations of the group until the result is in
normal form. An implementation is available in the Polycyclic package 
\cite{polycyc}.
\item[\rm (2)]
The computation of Hall polynomials via 'Deep Thought', see \cite{LGS98},
first computes the Hall polynomials from the considered presentation and
then uses the evaluation of polynomials. An implementation is available
in the Deep Thought package \cite{deepth}.
\item[\rm (3)]
The 'Malcev collection', see \cite{AL07}, computes the Lie algebra 
corresponding to $G$ via the Malcev correspondence and then uses this
to translate the multiplication to addition in the Lie algebra. 
An implementation is available in the Guarana package \cite{guarana}.
\end{items}

Our method as well as the methods (2) and (3) are similar in the respect
that they first need a precomputation step and then, based on this, have 
a highly effective multiplication method. The three precomputation steps 
are quite different. Our precomputation step requires the evaluation of 
given polynomials and thus is highly effective, but, on the other hand, 
this method is limited to Hirsch length at most $7$. The precomputation 
of (2) requires an application of the Deep Thought algorithm and the 
precomputation step of (3) requires the computation of the setup of the 
Malcev correspondence.

Method (1) is of an entirely different nature. Its runtime depends heavily
on the size of the input. Nonetheless it has the advantage that it does
not require a precomputation step and thus can be used much more readily.

%%%%%%%%%%%%%%%%%%%%%%%%%%%%%%%%%%%%%%%%%%%%%%%%%%%%%%%%%%%%%%%%%%%%%%%%%%%%%

\end{document}